\numberwithin{equation}{section}
\definecolor{darkblue}{rgb}{0,0,0.5}
\newdimen\margin
\def\textno#1&#2\par{
   \margin=\hsize
   \advance\margin by -4\parindent
          \setbox1=\hbox{\sl#1}
   \ifdim\wd1 < \margin
      $$\box1\eqno#2$$
   \else
      \bigbreak
      \hbox to \hsize{\indent$\vcenter{\advance\hsize by -3\parindent
      \it\noindent#1}\hfil#2$}
      \bigbreak
   \fi}
\newtheorem{theorem}[algorithm]{Theorem}
\newtheorem{lemma}[algorithm]{Lemma}
\theoremstyle{definition}
\newtheorem{defin}[algorithm]{Definition}
\newtheorem{remark}[algorithm]{Remark}
\newcounter{stepenv}
\newenvironment{stepenv}[1][]{\refstepcounter{stepenv}}{}
\newcounter{step}[stepenv]
\newcounter{substep}[step]
\renewcommand{\thesubstep}{\thestep.\arabic{substep}}
\newcounter{claim}
\def\In{\subseteq}
\def\COMMENT#1{}
\def\TASK#1{}
\let\TASK=\footnote             
\let\COMMENT=\footnote          
\author{
Nemanja Dragani\'c\thanks{
Mathematical Institute, University of Oxford, UK. \\\emph{Email}: \textbf{nemanja.draganic@maths.ox.ac.uk}.
Research supported by SNSF project 217926.
}
\and
Michael Krivelevich\thanks{
†School of Mathematical Sciences, Tel Aviv University, Tel Aviv 6997801, Israel. \\
\emph{Email}: \textbf{krivelev@tauex.tau.ac.il}.
Research supported in part by NSF-BSF grant 2023688. 
}}
\title{Disjoint connected dominating sets in pseudorandom graphs}
\begin{document} 
\maketitle
\begin{abstract}
A connected dominating set (CDS) in a graph is a dominating set of vertices that induces a connected subgraph. Having many disjoint CDSs in a graph can be considered as a measure of its connectivity, and has various graph-theoretic and algorithmic implications.
We show that $d$-regular (weakly) pseudoreandom graphs contain $(1+o(1))d/\ln d$ disjoint CDSs, which is asymptotically best possible. In particular, this implies that random $d$-regular graphs typically contain $(1+o(1))d/\ln d$ disjoint CDSs. 
\end{abstract}

\section{Introduction}

Connected dominating sets (CDSs) are an important concept from graph theory with many practical applications.
A CDS in a graph is a subset of vertices that forms a dominating set (every vertex is either in the set or adjacent to a vertex in the set) and induces a connected subgraph. 
These sets are important for tasks such as efficient routing~\cite{cheng2006virtual,wu2003broadcasting}, backbone formation~\cite{ephremides1987design,chang2004routing}, and load balancing in networks~\cite{chen2001span,deb2003multi}, providing a compact and connected subset that helps maintain overall network functionality (see the survey~\cite{yu2013connected}).
Computing the size of the smallest CDS of a graph is a well-known NP-hard problem (see~\cite{yu2013connected}) and it is also hard to approximate.

Another important avenue of research involves finding small CDSs in graphs with specific constraints. Finding a small CDS in a graph is equivalent to finding a spanning tree with few non-leaves, and the results in the current paragraph have historically used the latter language. Generalizing and improving results of
Linial and Sturtevant (see \cite{bonsma2012improved}), Kleitman and West \cite{kleitman1991spanning}, Griggs and Wu \cite{griggs1992spanning} and of
Caro, West and Yuster~\cite{caro2000connected},
Alon~\cite{alon2023spanning} showed that every connected $n$-vertex graph with minimum degree at least $d$ contains a CDS of size at most $\frac{ n}{
d + 1} (\ln(d + 1) + 4) - 2,$ or equivalently, a spanning tree with that few non-leaves. This is asymptotically best possible, as random $d$-regular graphs do not contain a CDS of size $(1-o_d(1))\frac{ n}{d} \ln d  $; for more history on this problem see~\cite{alon2023spanning}. 

In many applications, it is important to have multiple (vertex-disjoint) CDSs in a graph. Partitions and packings of CDSs play a key role in the design of wireless networks, where a CDS acts as a virtual backbone, enabling other nodes to exchange messages and route traffic efficiently~\cite{das1997routing,das1997routing2}. 
Additionally, work by Censor-Hillel, Ghaffari and Kuhn~\cite{censor2014new} highlights a strong connection between optimal packings of CDSs and the throughput of store-and-forward routing algorithms in wireless networks. As we will discuss below, multiple CDSs are important in the study of the connectivity of graphs~\cite{censor2014new,censor2017tight};
in fact, if a graph contains $k$ disjoint CDSs, then it is $k$-connected. 
Multiple disjoint CDSs have also recently been exploited in the context of graph rigidity~\cite{krivelevich2023rigid}.

In the context of the aforementioned result of Alon, if one aims to find several vertex disjoint CDSs in a connected graph, a minimum degree assumption is not enough. Indeed, two cliques of size $k$ sharing exactly one vertex cannot have two disjoint connected dominating sets, as each such set must contain the shared vertex. Evidently, the problem with this example is low connectivity, so it is natural to ask what happens for $k$-connected graphs. 

As we noticed, $k$ disjoint CDSs imply $k$-connectivity.
The converse direction is not only interesting in its own right, but it is related to the study of vertex-connectivity of random subgraphs of $k$-connected graphs, as demonstrated by Censor-Hillel, Ghaffari, and Kuhn~\cite{censor2014new} and in the subsequent improvement by Censor-Hillel, Ghaffari, Giakkoupis, Haeupler, and Kuhn~\cite{censor2017tight}. In the latter, it was shown that $k$-connected $n$-vertex graphs contain at least $\Omega(k/\log^2n)$ disjoint CDSs. As explained in~\cite{censor2014new}, this result can be viewed as an analog of the classic results of Tutte and Nash-Williams postulating that $k$-edge connected graphs contain at least $(k-1)/2$ edge-disjoint spanning trees. Instead of “decomposing" the edge-connectivity into spanning trees, the vertex-connectivity is decomposed into disjoint CDSs.
Perhaps surprisingly, in \cite{censor2014new} it was shown that there exist $k$-connected graphs with at most $O(k/\log n+1)$ disjoint CDSs, for all $k\in[n/4]$. In particular, they also show that the connectivity has to grow at least logarithmically with $n$ to guarantee at least $2$ disjoint CDSs.

An important class of highly connected graphs are pseudorandom graphs, and in particular $(n, d, \lambda)$-graphs, introduced by Alon (see the survey~\cite{krivelevich2006pseudo}). 
Those are $n$-vertex $d$-regular graphs, characterized by their spectral properties.

\begin{defin}
An $n$-vertex graph $G$ whose adjacency matrix has eigenvalues $\lambda_1\geq \lambda_2\geq \ldots\geq \lambda_n$ is an $(n,d,\lambda)$-\emph{graph} if it is $d$-regular and satisfies: $\max\{\lambda_2,|\lambda_n|\}\leq \lambda$.
\end{defin}
We refer readers who are less familiar with this class of graphs to~\Cref{sec:ndlambda}, and in particular to~\Cref{lem:EML} (the so called Expander Mixing Lemma) giving a tangible property of $(n,d,\lambda)$-graphs which will be sufficient for the purposes of our study.
These graphs exhibit strong connectivity and expansion properties, making them ideal candidates for applications in network design requiring high robustness against failures \cite{hoory2006expander}. Even though those graphs are characterized by a deterministic property, they resemble random graphs in many important aspects.
As we will exhibit in~\Cref{sec:ndlambda}, the \emph{spectral ratio} $d/\lambda$ controls how close the graph is to a random graph: the larger it is, the closer the edge distribution is to that of a random graph. 

High spectral ratio indicates that the graph is a robustly expanding graph, which intuitively should support many disjoint CDSs, as such graphs remain well-connected even when certain large subsets of vertices are removed. As we have seen, just high connectivity does not necessarily guarantee even two disjoint CDSs, so it is natural to try and understand whether a type of \emph{robust connectivity} as in $(n, d, \lambda)$-graphs implies the existence of many CDSs.

In a random $n$-vertex $d$-regular graph, every dominating set has at least $(1+o_d(1))\frac{n\ln d}{d}$ vertices \cite{alon1990transversal}, with high probability. 
Hence, they typically can contain at most $(1+o_d(1))\frac{d}{\ln d}$ disjoint CDSs.
Almost certainly, such graphs are $(n, d, \lambda)$-graphs with essentially the largest possible spectral ratio ($\lambda\sim \sqrt{d}$, see~\cite{sarid2023spectral}), hence $(1+o_d(1))\frac{d}{\ln d}$ disjoint CDSs is a natural upper bound to attain in $(n,d,\lambda)$-graphs, possibly even with more modest assumptions on the spectral ratio. And indeed, in this work we show that that very modest asumptions on the spectral ratio of $(n,d,\lambda)$-graphs already guarantee the (asymptotically) optimal number of disjoint CDSs.

\begin{theorem}\label{thm:main}
For every $\varepsilon>0$, there exists a $C=C(\varepsilon)$ so that the following holds.
Every $(n,d,\lambda)$-graph with $d/\lambda>C$ contains at least $(1-\varepsilon)\frac{d}{\ln d}$ disjoint connected dominating sets.
\end{theorem}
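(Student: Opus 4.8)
The plan is to build the $k := (1-\eps)d/\ln d$ disjoint CDSs in two phases: first produce $k$ disjoint \emph{dominating} sets that are moderately small, then connect each one up using a disjoint reservoir of "connector" vertices, while controlling everything via the Expander Mixing Lemma (\Cref{lem:EML}). More precisely, I would first set aside a small linear-sized random-looking reservoir $R$ of vertices (say a random $\delta n$ fraction of $V(G)$, with $\delta$ small compared to $\eps$) to be used later for connectivity, and work on $G' := G - R$. The key point is that by the Expander Mixing Lemma every vertex still has degree $(1-\delta+o(1))d$ into $V(G')$, and more generally every vertex set of size $\geq \eta n$ has the expected edge count up to an error term of order $\lambda n$, which is negligible since $d/\lambda > C$.

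For the first phase I would partition (most of) $V(G')$ into $k$ classes $V_1,\dots,V_k$, each of which is a dominating set of $G$. The natural way is a random partition: assign each vertex of $V(G')$ independently and uniformly to one of $k$ classes. Then a fixed vertex $v$ fails to be dominated by $V_i$ only if none of its $\approx d$ neighbours (minus those in $R$) landed in $V_i$, which happens with probability roughly $(1-1/k)^{(1-\delta)d} = e^{-(1-\delta)d/k} = e^{-(1-o(1))\ln d} \leq d^{-1+o(1)}$; since $k = o(d/\ln d \cdot \ln d) $... more carefully, the expected number of (vertex, class) pairs with a domination failure is at most $n \cdot k \cdot d^{-1+o(1)} = o(n)$ because $k \ll d$. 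So, after deleting the $o(n)$ "bad" vertices from their classes and re-distributing them, or simply absorbing the few uncovered vertices later, we may assume each $V_i$ dominates $G$. One should use a concentration inequality (Azuma/McDiarmid on the partition, or a simple union bound with Chernoff) to make the failure count genuinely $o(n)$ with positive probability, and here a mild quantitative handle on $d$ (large, via $d \geq d/\lambda > C$ and $\lambda \geq \sqrt{d(1-o(1))}$... actually one needs $d$ large, which follows since $\lambda \geq 1$ forces $d > C$) is what makes $d^{-1+o(1)}$ meaningful.

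The second, harder phase is to make each $V_i$ connected by adding a disjoint set of vertices from the reservoir $R$. The obstacle is that $G[V_i]$ may a priori have many components, so I need a bounded-size "patch" from $R$ that hits all of them and is itself connected. The strategy: first show $G[V_i]$ has few components — any component $C$ of $G[V_i]$ with $|C| \geq \sqrt{n}$, say, would by EML send $\Omega(d|C| - \lambda n)$ edges outside, but since $V_i$ dominates, a more careful argument shows large components are impossible to have too many of; in fact with a random partition each $V_i$ has size $(1+o(1))n/k$ and by EML is itself an expander on its own vertex set (any $S \subseteq V_i$ with $|S| \leq |V_i|/2$ has $e(S, V_i \setminus S) > 0$ once $d/\lambda$ is large enough relative to $k$), so $G[V_i]$ is \emph{already connected} up to a sublinear "dust" of small components covering $o(n)$ vertices total. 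Then I use the reservoir $R$: since $R$ is itself an $(|R|, \Theta(d), \lambda)$-ish expander-like set and every small component has $\Omega(d)$ neighbours in $R$ (by EML, as $R$ has size $\delta n$), I can greedily grow, for each $i$ in turn, a bounded-depth tree inside $R \cup (\text{dust of } V_i)$ connecting all of $V_i$'s components, spending only $o(n/k)$ reservoir vertices per class and keeping the $k$ patches pairwise disjoint because $R$ is large and each use is tiny. Summing, the $k$ augmented sets $V_i^+ = V_i \cup (\text{patch}_i)$ are pairwise disjoint connected dominating sets.

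The main obstacle I expect is Phase 2: cleanly arguing that $G[V_i]$ breaks into only a negligible amount of "dust" beyond one giant component, and then that the reservoir $R$ simultaneously supports $k$ disjoint bounded-size connecting structures — one must budget the reservoir carefully (total reservoir usage $\sum_i |\text{patch}_i| = o(n)$, so $\delta n$ suffices) and re-apply EML \emph{inside} $R$ and between $R$ and the dust, which requires $d/\lambda$ large as a function of $\eps$ and $\delta$ only, not of $n$. A secondary technical point is interfacing Phases 1 and 2: the few vertices left uncovered or thrown away in Phase 1 must be reabsorbed into some $V_i$ without breaking domination or disjointness, which is routine since each such vertex has $\approx d$ neighbours spread over the classes. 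I would also remark that the bound $k=(1-\eps)d/\ln d$ is tight by the domination lower bound for random $d$-regular graphs quoted in the introduction, so no effort is wasted chasing a better constant.
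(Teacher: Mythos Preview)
The broad two-phase shape matches the paper, but both phases have genuine gaps.

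\textbf{Phase 2 is where it breaks.} Your claim that EML makes $G[V_i]$ an expander on its own vertex set (hence connected up to sublinear dust) needs $d/\lambda$ large \emph{relative to $k$}: for $e(S,V_i\setminus S)>0$ when $|S|\le |V_i|/2$ you need $|V_i|\gtrsim \lambda n/d$, i.e.\ $\lambda = O(\ln d)$. The theorem only assumes $d/\lambda>C$ for a constant $C=C(\eps)$, so $\lambda n/d$ can be as large as $n/C$, dwarfing $|V_i|\approx n\ln d/d$; EML then says nothing about the component structure of $G[V_i]$. Nor does a one-shot random partition control $\delta(G[V_i])$: the per-vertex probability of having zero $V_i$-neighbours is only $d^{-\Theta(1)}$, and you cannot union-bound over $n$ vertices. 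The paper deals with exactly this by a \emph{two-round} Local Lemma colouring (and remarks explicitly that a single round does not achieve the $(1-\eps)d/\ln d$ constant): the first round gives each vertex $\Theta(\ln^6 d)$ neighbours of each coarse colour, the second refines and guarantees $\delta(G[S_i])\ge \eps^2\ln d$, whence $G[S_i]$ has at most $O(n/d)$ components.

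\textbf{Your reservoir budget is also off.} Even granting $O(n/d)$ components per class, connecting them by bounded-depth trees in $R$ costs paths of length $\Theta(\log_{d/\lambda} n)=\Theta(\log_C n)$, so over all $k$ classes you spend $\Theta\big((n/d)\cdot k\cdot \log_C n\big)=\Theta(n\log_C n/\ln d)$ reservoir vertices, which exceeds $\delta n$ whenever $d$ is, say, polylogarithmic in $n$. The paper avoids this with Friedman--Pippenger extendability \emph{with rollbacks}: at each step it grows $(D/2{-}1)$-ary trees from the current endpoints, finds one connecting edge, then deletes the unused tree vertices leaf-by-leaf while preserving extendability. The path created when $|X_i|-j$ components remain has length $O\big(\log_C(n/(|X_i|-j))\big)$, and the telescoping sum $\sum_j \log\big(n/(|X_i|-j)\big)=O(|X_i|\log d)$ --- not $O(|X_i|\log n)$ --- is precisely what makes the total across all $d^*$ classes $O(n/\ln C)<\delta n$. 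A plain greedy tree-growing scheme without the rollback-and-telescope mechanism does not achieve this.
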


In particular, this implies that random $d$-regular graphs contain $(1+o_d(1))\frac{d}{\ln d}$ disjoint CDSs. Furthermore, essentially the same proof, along with standard concentration bounds for vertex degrees, can be used to show that the binomial random graph $G(n, \frac{d}{n})$ contains $(1 + o_d(1)) \frac{d}{\ln d}$ CDSs for $d = \omega(\ln n)$. This result is optimal both in terms of the number of CDSs and the asymptotics of the edge probability. Specifically, when $d\ll \log n$, typically there are isolated vertices in $G(n, \frac{d}{n})$, and thus there are no CDS in the graph. 

\Cref{thm:main} also has implications related to the well-known Independent Spanning Trees conjecture by Itai and Zehavi~\cite{zehavi1989three} (see, e.g., the survey~\cite{cheng2023independent} for historical background). The conjecture asserts that every \(k\)-connected graph $G$ contains \(k\) vertex-independent trees; these are \(k\) spanning trees rooted at a vertex \(r \in V(G)\), such that for each vertex \(v \in V(G)\), the paths from \(r\) to \(v\) in different trees are internally vertex-disjoint. So far, the best known
approximation of this 1989 conjecture is given in \cite{censor2017tight}, and gives $\Omega(k/\log^2n)$ vertex-independent trees. It is easy to verify that a graph $G$ possessing $k$ disjoint CDSs contains $k$ vertex-independent trees. Hence, our result implies that $(n,d,\lambda)$-graphs with mild assumptions on the spectral ratio contain $(1 + o_d(1)) \frac{d}{\ln d}$ vertex-independent trees. We include some brief comments on this conjecture in the concluding remarks.

The decision problem whether a graph contains at least two disjoint CDSs is NP-complete. 
As we will see in our proof, in the case of $(n,d,\lambda)$-graphs, there is a polynomial time algorithm to find $(1 + o_d(1)) \frac{d}{\ln d}$ disjoint such sets. 


\section{Preliminaries}
\textbf{Notation.} Let $G$ be a graph, and $A,B$ be subsets of its vertices. We denote by $e(A,B)$ the number of ordered pairs $(a,b)\in A\times B$ of edges in $G$.
We denote by $\Gamma_B(A)$ the vertices in $B$ that have at least one neighbor in $A$, and by $N_B(A)$ the external neighborhood in $B$, i.e., the vertices in $\Gamma_B(A)\setminus A$. 
We also write $\Gamma_G(A)$ and $N_G(A)$ when $B=V(G)$.
We omit the subscripts when they are clear from the context.
Given a set $S$, we denote by $I[S]$ the graph with vertex set $S$, isomorphic to an independent set. A linear forest is a graph that is a vertex-disjoint union of paths.
We systematically omit rounding signs for clarity of presentation, where it does not impact the argument.

\subsection{Properties of $(n,d,\lambda)$-graphs}\label{sec:ndlambda}

The following is the well known Expander Mixing Lemma, which states that the edge distribution in $(n,d,\lambda)$-graphs is close to that of a random graph -- the smaller $\lambda$ is, the closer the edge distribution is to what we would expect in a random graph.
\begin{lemma}[Expander Mixing Lemma]\label{lem:EML}
Let $G$ be a $(n,d,\lambda)$-graph, and $A,B$ two vertex subsets. Then the following holds:
\[
\left|e(A,B)-\frac{|A||B|d}{n}\right|\leq \lambda\sqrt{|A||B|}
\]
\end{lemma}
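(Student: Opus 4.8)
The plan is to run the standard spectral argument. Write $M$ for the adjacency matrix of $G$ and, for a vertex set $S$, let $\mathbf{1}_S\in\mathbb{R}^n$ denote its characteristic vector. Since $e(A,B)$ counts the ordered pairs $(a,b)\in A\times B$ with $ab\in E(G)$, we have $e(A,B)=\mathbf{1}_A^{\top}M\mathbf{1}_B$, and the whole point is to isolate, inside this bilinear form, the contribution of the top eigenvalue of $M$ — which produces exactly the main term $|A||B|d/n$ — and to bound the remainder by $\lambda\sqrt{|A||B|}$.

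Concretely, I would fix an orthonormal basis $v_1,\dots,v_n$ of $\mathbb{R}^n$ consisting of eigenvectors of $M$ with $Mv_i=\lambda_i v_i$; since $G$ is $d$-regular, $\mathbf{1}$ is an eigenvector with eigenvalue $\lambda_1=d$, so we may take $v_1=\mathbf{1}/\sqrt{n}$. Expanding $\mathbf{1}_A=\sum_i\alpha_iv_i$ and $\mathbf{1}_B=\sum_i\beta_iv_i$, orthonormality gives $\alpha_1=\langle\mathbf{1}_A,v_1\rangle=|A|/\sqrt{n}$, $\beta_1=|B|/\sqrt{n}$, and $\sum_i\alpha_i^2=\|\mathbf{1}_A\|^2=|A|$, $\sum_i\beta_i^2=|B|$. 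Then
\[
\mathbf{1}_A^{\top}M\mathbf{1}_B=\sum_{i=1}^n\lambda_i\alpha_i\beta_i=\frac{|A||B|d}{n}+\sum_{i=2}^n\lambda_i\alpha_i\beta_i ,
\]
so the quantity to be estimated equals $\bigl|\sum_{i\ge 2}\lambda_i\alpha_i\beta_i\bigr|$. Using that $\lambda_2\ge\lambda_i\ge\lambda_n$ for $2\le i\le n$ together with $\max\{\lambda_2,|\lambda_n|\}\le\lambda$, every $|\lambda_i|$ with $i\ge 2$ is at most $\lambda$, and then the triangle inequality followed by Cauchy--Schwarz yields
\[
\Bigl|\sum_{i\ge 2}\lambda_i\alpha_i\beta_i\Bigr|\le\lambda\sum_{i\ge 2}|\alpha_i|\,|\beta_i|\le\lambda\sqrt{\sum_{i\ge 2}\alpha_i^2}\,\sqrt{\sum_{i\ge 2}\beta_i^2}\le\lambda\sqrt{|A||B|} ,
\]
which is the desired bound.

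I do not expect any real obstacle: once the eigenbasis is set up, everything is a short calculation. The only points meriting a moment's care are that $\max\{\lambda_2,|\lambda_n|\}\le\lambda$ already forces $|\lambda_i|\le\lambda$ for \emph{all} $i\ge 2$ (by monotonicity of the eigenvalues, so the bound does not concern only $\lambda_2$ and $\lambda_n$), and that discarding the $i=1$ term before applying Cauchy--Schwarz only decreases the right-hand side. An equivalent alternative would be to first project $\mathbf{1}_A$ and $\mathbf{1}_B$ onto the orthogonal complement of $\mathbf{1}$ and then use that $M$ acts there with all eigenvalues of absolute value at most $\lambda$, but the bilinear-form bookkeeping above is the most transparent.
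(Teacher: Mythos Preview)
Your proof is correct and is precisely the standard spectral argument for the Expander Mixing Lemma. Note, however, that the paper does not actually supply a proof of this lemma: it is quoted as a well-known result and used as a black box, so there is no ``paper's own proof'' to compare against. Your write-up would serve perfectly well as the omitted justification.
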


The next lemma is a simple consequence of the Expander Mixing Lemma. It states that there is always an edge between two large enough vertex sets, and that a subgraph of an $(n,d,\lambda)$-graph of high minimum degree has strong expansion properties. 

\begin{lemma}\label{lem:expansion}
Let $G$ be an $(n,d,\lambda)$-graph. Then the following hold:
\begin{enumerate}[label=(\alph*)]
\item\label{p:joint} For every two disjoint sets of vertices $X,Y$ of size $|X|,|Y|>\frac{\lambda n}{d}$ it holds that $e(X,Y)>0$.
\item\label{p:expansion} Fix $0<\varepsilon\leq 1$. Let $B\In V(G)$ be such that every vertex has at least $\varepsilon d/3$ neighbors in $B$. Then for every $k$ with $1<k< (\frac{d}{12\lambda})^2$ and for every subset $X\subseteq V(G)$ of size at most $\frac{n}{12k}$ it holds that $|\Gamma_B(X)|\geq \varepsilon^2k|X|$.
\end{enumerate}
\end{lemma}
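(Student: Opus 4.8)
The plan is to prove both parts as routine applications of the Expander Mixing Lemma (\Cref{lem:EML}).

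\textbf{Part \ref{p:joint}.} Suppose towards a contradiction that $e(X,Y)=0$ for disjoint sets $X,Y$ with $|X|,|Y|>\lambda n/d$. Applying \Cref{lem:EML} to the pair $(X,Y)$ gives $\left|0-\frac{|X||Y|d}{n}\right|\le\lambda\sqrt{|X||Y|}$, i.e. $\frac{|X||Y|d}{n}\le\lambda\sqrt{|X||Y|}$, which rearranges to $\sqrt{|X||Y|}\le\frac{\lambda n}{d}$. But $\sqrt{|X||Y|}>\frac{\lambda n}{d}$ by the size hypothesis, a contradiction. (One has to be slightly careful to use a strict inequality throughout, but this is immediate since the hypotheses are strict.)

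\textbf{Part \ref{p:expansion}.} Write $Z=\Gamma_B(X)$ and suppose for contradiction that $|Z|<\varepsilon^2 k|X|$. Every vertex of $X$ has at least $\varepsilon d/3$ neighbours in $B$, and all of those neighbours lie in $Z$ by definition of $\Gamma_B(X)$; hence $e(X,Z)\ge \frac{\varepsilon d}{3}|X|$. On the other hand, $X$ and $Z$ need not be disjoint, so I would first pass to $Z'=Z\setminus X$ and note $e(X,Z)\le e(X,Z')+e(X,X)\le e(X,Z')+d|X|$; since $|X|\le n/(12k)$ and $k>1$ the term $d|X|$ is small compared to $\frac{\varepsilon d}{3}|X|$ only if $\varepsilon$ is not too small, so instead the cleaner route is to apply \Cref{lem:EML} directly to the (possibly overlapping) pair $(X,Z)$, for which the lemma still holds: $e(X,Z)\le\frac{|X||Z|d}{n}+\lambda\sqrt{|X||Z|}$. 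Combining,
\[
\frac{\varepsilon d}{3}|X|\le\frac{|X||Z|d}{n}+\lambda\sqrt{|X||Z|}.
\]
Now bound the two right-hand terms separately using $|Z|<\varepsilon^2 k|X|$ and $|X|\le n/(12k)$: the first term is at most $\frac{d}{n}\cdot|X|\cdot\frac{n}{12k}$-scaled contribution, more precisely $\frac{|X||Z|d}{n}<\frac{\varepsilon^2 k|X|^2 d}{n}\le\frac{\varepsilon^2|X|d}{12}\le\frac{\varepsilon d|X|}{12}$, which is at most a quarter of the left side (as $\varepsilon\le1$, so $\varepsilon^2\le\varepsilon$, and $\frac{1}{12}<\frac{1}{3}$ with room to spare). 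For the second term, $\lambda\sqrt{|X||Z|}<\lambda\sqrt{\varepsilon^2 k}\,|X|=\lambda\varepsilon\sqrt{k}\,|X|$, and since $k<(d/12\lambda)^2$ we get $\sqrt{k}<d/(12\lambda)$, so this is less than $\lambda\varepsilon\cdot\frac{d}{12\lambda}|X|=\frac{\varepsilon d|X|}{12}$. Hence the right-hand side is strictly less than $\frac{\varepsilon d|X|}{12}+\frac{\varepsilon d|X|}{12}=\frac{\varepsilon d|X|}{6}<\frac{\varepsilon d|X|}{3}$, contradicting the displayed inequality. Therefore $|\Gamma_B(X)|\ge\varepsilon^2 k|X|$.

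\textbf{Main obstacle.} There is no deep obstacle here; the only thing requiring care is bookkeeping the constants so that both right-hand contributions come out strictly below $\frac{\varepsilon d|X|}{6}$, and in particular keeping track of whether $X$ and $\Gamma_B(X)$ overlap (they may, so one should invoke the Expander Mixing Lemma on the overlapping pair directly rather than trying to pass to an external neighbourhood, since $N_B(X)$ could be much smaller than $\Gamma_B(X)$ when $X$ itself is dense in $B$). The exponent $\varepsilon^2$ rather than $\varepsilon$ in the conclusion is exactly what is needed to absorb the $\frac{|X||Z|d}{n}$ term cheaply, and the bound $k<(d/12\lambda)^2$ is exactly tuned so the spectral error term $\lambda\sqrt{k}$ stays below $d/12$.
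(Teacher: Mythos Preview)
Your argument is correct and essentially identical to the paper's: both parts are direct applications of the Expander Mixing Lemma with the same constant-chasing (bounding $\frac{|X||Z|d}{n}$ by $\frac{\varepsilon^2 d|X|}{12}$ via $|Z|<\varepsilon^2 k|X|$ and $|X|\le n/(12k)$, and bounding $\lambda\sqrt{|X||Z|}$ by $\frac{\varepsilon d|X|}{12}$ via $\sqrt{k}<d/(12\lambda)$). The digression about passing to $Z'=Z\setminus X$ is unnecessary clutter---you correctly abandon it, and the paper simply applies the Expander Mixing Lemma to the possibly-overlapping pair $(X,\Gamma_B(X))$ without comment---but otherwise the proofs match.
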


\begin{proof}
\begin{enumerate}[label=(\alph*)]

\item By the Expander Mixing Lemma, we get that $$e(A,B)\geq \frac{|X||Y|d}{n}-\lambda\sqrt{|X||Y|}>\frac{\lambda^2n}{d}-\frac{\lambda^2n}{d}=0, $$
which finishes the proof.

\item Let $X$ be of size $|X|\leq n/12k$, and suppose for contradiction that $|\Gamma_B(X)|< \varepsilon^2 k|X|\leq \varepsilon^2 n/12$.
The number of edges between $X$ and $B$ is at least $\varepsilon|X|d/6$. On the other hand, by the Expander mixing lemma, we have that 
$$e(X,\Gamma_B(X))\leq |X||\Gamma_B(X)|\frac{d}{n}+\lambda \sqrt{|X||\Gamma_B(X)|}\leq \frac{\varepsilon^2|X|d}{12}+\varepsilon\lambda |X|\sqrt{k}< 2\cdot \frac{\varepsilon|X|d}{12}\leq \frac{\varepsilon|X|d}{6},$$
a contradiction.
\end{enumerate}
\end{proof}

\begin{remark}
It is known and trivial that we always have $\lambda=\Omega(\sqrt{d})$ when, say, $d<0.99n$~\cite{krivelevich2006pseudo}. 
The $(n,d,\lambda)$-graphs with $\lambda=\Theta(\sqrt{d})$ have best possible expansion properties, and as can be seen from~\Cref{lem:expansion}, small vertex sets expand by a factor of $\Theta(d)$ in those graphs. Though random $d$-regular graphs are well-known examples of such graphs, there exist several explicit constructions (see \cite{krivelevich2006pseudo}); one example is the famous Ramanujan graphs constructed by Lubotzky, Phillips and Sarnak \cite{lubotzky1988ramanujan}; see also~\cite{alon2021explicit} for constructions for all possible choices of $d$ and $n$.
\end{remark}

\subsection{The Friedman-Pippenger tree embedding technique with rollbacks}

In this section, we introduce our main tool: the Friedman-Pippenger tree embedding technique, also known as the extendability method. 
Building on the tree embedding results of Friedman and Pippenger~\cite{friedman1987expanding} and of Haxell~\cite{haxell2001tree}, this technique enables the robust construction of linear-sized trees in expander graphs. The key result we rely on is~\Cref{thm:FP}, which facilitates the construction of such trees. Additionally, a notable enhancement to this method is provided by the simple~\Cref{lemma:delete}, which allows us to remove leaves from the currently constructed tree while preserving the extendability property. This preservation is crucial for the repeated application of~\Cref{thm:FP}. 
This method has been pivotal in settling many long-standing open problems in graph theory; see, for example, \cite{draganic2022rolling,montgomery2019spanning, draganic2024hamiltonicity}.
We will use the machinery presented in \cite{montgomery2019spanning} -- further details are provided below. Before we state the results, we give two useful definitions. The first one describes the notion of expansion we will use.
\begin{defin}\label{def:expanding}
	Let $s \in \mathbb{N}$ and $K > 0$. We say that a graph $G$ is \emph{$(s, K)$-expanding} if for every subset $X \subseteq V(G)$ of size $|X| \le s$ we have $|N_G(X)| \ge K |X|$.
\end{defin}

We also need the following notion 
\begin{defin}
A graph $G$ is $m$-\emph{joined} for $m>0$, if for every two disjoint vertex subsets $A,B\subseteq V(G)$ of size at least $m$ we have that $e(A,B)>0$.
\end{defin}

We also need the notion of an $(m,D)$-extendable embedding from \cite{montgomery2019spanning}.
\begin{defin}\label{deF:goodness}
Let $m, D \in \mathbb{N}$ satisfy $D \geq 3$ and $m \geq 1$, let $G$ be a graph, and let $S \subset G$ be a subgraph of $G$. We say that $S$ is $(m, D)$-\emph{extendable} if $S$ has maximum degree at most $D$ and
\begin{equation}\label{eq:extendable}
|\Gamma_G(U) \setminus V(S)| \geq (D - 1)|U| - \sum_{x \in U \cap V(S)} (d_S(x) - 1)
\end{equation}
for all sets $U \subset V(G)$ with $|U| \leq 2m$.
\end{defin}

The next theorem is the key technical tool in the section. It allows us to extend an $(m,D)$-extendable embedding of a graph by attaching to one of its vertices a tree of certain size and maximum degree at most $D/2$.
\begin{theorem}[Corollary 3.7  in \cite{montgomery2019spanning}] \label{thm:FP}
Let $m, D \in \mathbb{N}$ satisfy $D \geq 3$ and $m \geq 1$, and let $T$ be a tree with maximum degree at most $D/2$, which contains the vertex $t \in V(T)$. Let $G$ be an $m$-joined graph and suppose $R$ is an $(m, D)$-extendable subgraph of $G$ with maximum degree $D/2$. Let $v \in V(R)$ and suppose $|R| + |T| \leq |G| - 2Dm - 3m$. Then, there is a copy $S$ of $T$ in $G - (V(R) \setminus \{v\})$, in which $t$ is copied to $v$, such that $R \cup S$ is $(m, D)$-extendable in $G$.
\end{theorem}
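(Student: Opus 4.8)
The plan is to build the copy $S$ of $T$ greedily, attaching one leaf at a time while maintaining $(m,D)$-extendability of the partial embedding, so that \Cref{thm:FP} reduces to a single-leaf extension statement. First I would fix an ordering $t=t_1,t_2,\dots,t_{|T|}$ of $V(T)$ in which every $t_i$ with $i\ge 2$ has exactly one neighbour $p(t_i)$ among $t_1,\dots,t_{i-1}$ (any search order of the tree rooted at $t$); revealing $t_i$ then attaches a single new leaf to the already-embedded image of $p(t_i)$. Setting $t_1\mapsto v$ makes the initial partial image the single vertex $v\in V(R)$, so $R$ together with it is $(m,D)$-extendable by hypothesis. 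The theorem then follows by induction from the following statement: \emph{if $G$ is $m$-joined, $R'$ is an $(m,D)$-extendable subgraph, $w\in V(R')$ has $d_{R'}(w)\le D-1$, and $|R'|\le |G|-2Dm-3m$, then there is a vertex $u\in \Gamma_G(\{w\})\setminus V(R')$ such that the graph obtained from $R'$ by adding $u$ and the edge $wu$ is again $(m,D)$-extendable.} Applying this with $R'=R\cup S_{i-1}$ and $w$ the image of $p(t_i)$ produces the image of $t_i$ in $V(G)\setminus(V(R)\cup V(S_{i-1}))$; since $T$ has maximum degree at most $D/2$ and $|R|+|T|\le |G|-2Dm-3m$, the degree bound $d(w)\le D-1$ and the size bound persist at every step (the only vertex that can reach degree $D$ is $v$, and only once its final child is attached), and every new vertex avoids $V(R)\setminus\{v\}$, yielding the desired copy.

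To prove the extension statement I would encode extendability through the \emph{defect function}
\[
g(U):=|\Gamma_G(U)\setminus V(R')|-(D-1)|U|+\sum_{x\in U\cap V(R')}\big(d_{R'}(x)-1\big),
\]
so that $(m,D)$-extendability of $R'$ is precisely $g(U)\ge 0$ for all $|U|\le 2m$ (together with $\Delta(R')\le D$); call $U$ \emph{tight} if $g(U)=0$. Comparing the defect functions of $R'$ and of $R'+u$ (the leaf $u$ added at $w$) shows, after a short computation, that $R'+u$ fails to be $(m,D)$-extendable \emph{exactly} when there is a tight set $U$ with $|U|\le 2m$, $w\notin U$ and $u\in\Gamma_G(U)$. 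Thus a candidate $u\in W:=\Gamma_G(\{w\})\setminus V(R')$ is \emph{bad} precisely when it sends an edge into such a tight set, and applying extendability of $R'$ to $U=\{w\}$ gives $|W|\ge D-d_{R'}(w)\ge 1$, so candidates exist.

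The engine is that $g$ is \emph{submodular}: the neighbourhood term $U\mapsto|\Gamma_G(U)\setminus V(R')|$ is submodular and the remaining terms are modular, so the union of two tight sets is again tight whenever its size stays at most $2m$. Here the $m$-joined hypothesis confines the extremal sets to the usable range: for any tight $U$ with $|U|\le 2m$ one has $|\Gamma_G(U)\setminus V(R')|\le D|U|\le 2Dm$, whence the size budget leaves at least $m$ vertices in $V(G)\setminus(V(R')\cup U\cup\Gamma_G(U))$; were the part $U\setminus V(R')$ of size at least $m$, these two disjoint $\ge m$-sets would have no edge between them, contradicting $m$-joinedness. Using this to keep the relevant unions within size $2m$, I would capture all bad candidates inside a single tight set $U_0$ with $w\notin U_0$ and $|U_0|\le 2m-1$. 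Assuming for contradiction that every candidate is bad, i.e.\ $W\subseteq\Gamma_G(U_0)$, one gets $\Gamma_G(U_0\cup\{w\})\setminus V(R')=\Gamma_G(U_0)\setminus V(R')$, and since $w\in V(R')$ the tightness of $U_0$ yields
\[
g(U_0\cup\{w\})=d_{R'}(w)-D<0,
\]
contradicting extendability of $R'$ (note $|U_0\cup\{w\}|\le 2m$). Hence some candidate is good, which proves the extension statement and the theorem.

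I expect the main obstacle to be exactly the bookkeeping that keeps all the extremal (tight) sets inside the admissible range $|U|\le 2m$, so that submodular closure and the final one-line computation are legitimate: this is where the $m$-joined hypothesis and the slack $2Dm+3m$ in the size bound are spent, and matching up the constants (rather than the underlying idea) is the delicate point. By contrast, the contradiction $g(U_0\cup\{w\})=d_{R'}(w)-D<0$ is essentially forced and is the conceptual heart; the surrounding work is just ensuring its hypotheses hold throughout the leaf-by-leaf construction.
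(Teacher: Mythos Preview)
The paper does not prove \Cref{thm:FP}; it is imported verbatim as Corollary~3.7 from \cite{montgomery2019spanning} and used as a black box. So there is no ``paper's own proof'' to compare against here.

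That said, your sketch is the standard route (originating with Friedman--Pippenger and Haxell, and formalised in the extendability language by Montgomery): grow $T$ leaf by leaf, reduce to a one-leaf extension lemma, encode extendability via the defect function $g$, exploit submodularity to close tight sets under union, and derive a contradiction by adjoining $w$ to a maximal tight set $U_0$ not containing $w$. The degree and size checks you give are correct, and your final computation $g(U_0\cup\{w\})=d_{R'}(w)-D<0$ is indeed the punchline.

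The one place that needs more than a sentence is exactly the step you flag: showing there is a \emph{single} tight set $U_0$ with $w\notin U_0$, $|U_0|\le 2m-1$, whose neighbourhood absorbs every bad candidate. Submodularity only closes tight sets under union \emph{provided the union has size at most $2m$}, so one must argue (typically via a maximal-tight-set argument, combined with the bound $|U\setminus V(R')|<m$ you derived from $m$-joinedness and the slack $2Dm+3m$) that this size constraint is never violated when merging. Your bound $|U\setminus V(R')|<m$ is the right ingredient, but the inductive merging that turns it into $|U_0|\le 2m-1$ needs to be spelled out; also, the ``exactly when'' in your characterisation of failure of extendability for $R'+u$ should account for the case $u\in U$ (which contributes $0$ to the degree sum but removes $u$ from $\Gamma_G(U)\setminus V(R'')$). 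These are genuine details rather than conceptual gaps, and Montgomery's paper handles them; your outline matches that argument.
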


The following result shows that adding an edge between two vertices of an $(m,D)$-extendable graph maintains its extendability.
\begin{lemma}[Lemma 3.9 in \cite{montgomery2019spanning}] \label{lemma:non-leaf edge}
Let $m, D \in \mathbb{N}$ satisfy $D \geq 3$ and $m \geq 1$, let $G$ be a graph, and let $S$ be an $(m, D)$-extendable subgraph of $G$. If $s, t \in V(S)$ with $d_S(s), d_S(t) \leq D - 1$ and $st \in E(G)$, then $S + st$ is $(m, D)$-extendable in $G$.
\end{lemma}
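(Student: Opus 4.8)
The plan is to verify the two defining conditions of $(m,D)$-extendability directly for $S+st$, exploiting the fact that adding an edge does not change the vertex set: $V(S+st)=V(S)$.

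First I would check the maximum-degree condition. In passing from $S$ to $S+st$ only the degrees of $s$ and $t$ change, each increasing by exactly one, so $d_{S+st}(s)=d_S(s)+1\le D$ and $d_{S+st}(t)=d_S(t)+1\le D$ by hypothesis, while every other vertex keeps its $S$-degree, which is at most $D$ since $S$ is $(m,D)$-extendable. Hence $\Delta(S+st)\le D$. This is precisely the step for which the bounds $d_S(s),d_S(t)\le D-1$ are needed.

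Next I would verify the extendability inequality \eqref{eq:extendable} for $S+st$. Fix any $U\subseteq V(G)$ with $|U|\le 2m$. Because $V(S+st)=V(S)$, the left-hand side $|\Gamma_G(U)\setminus V(S+st)|$ equals $|\Gamma_G(U)\setminus V(S)|$, which is unchanged. On the right-hand side, $d_{S+st}(x)\ge d_S(x)$ for every vertex $x$, with equality unless $x\in\{s,t\}$, and since the degree term is \emph{subtracted} we obtain
\[
(D-1)|U|-\sum_{x\in U\cap V(S+st)}\bigl(d_{S+st}(x)-1\bigr)\;\le\;(D-1)|U|-\sum_{x\in U\cap V(S)}\bigl(d_S(x)-1\bigr)\;\le\;|\Gamma_G(U)\setminus V(S)|,
\]
where the last inequality is \eqref{eq:extendable} applied to $S$. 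This is exactly \eqref{eq:extendable} for $S+st$, so the verification is complete.

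There is no genuine obstacle here: the content of the lemma is entirely in noticing that the extendability inequality is monotone in the favourable direction — raising the degrees of vertices already in $S$ only decreases the quantity that must be dominated by $|\Gamma_G(U)\setminus V(S)|$ — so the only constraint one must keep track of is the cap on the maximum degree, which is guaranteed by the assumption $d_S(s),d_S(t)\le D-1$. Thus the lemma follows by simply unwinding \Cref{deF:goodness}.
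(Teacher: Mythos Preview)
Your proof is correct. The paper does not actually give its own proof of this lemma---it is quoted verbatim as Lemma~3.9 of \cite{montgomery2019spanning} and used as a black box---so there is nothing to compare against here, but your argument is exactly the standard direct verification: the vertex set is unchanged so the left-hand side of \eqref{eq:extendable} stays the same, while the right-hand side can only decrease because degrees are subtracted, and the degree cap is handled by the hypothesis $d_S(s),d_S(t)\le D-1$.
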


Finally, the last result we need is another simple corollary of the definition of $(m,D)$-extendability which allows for leaf removal.

\begin{lemma}[Lemma 3.8 in \cite{montgomery2019spanning}] \label{lemma:delete}
Let $m, D \in \mathbb{N}$ satisfy $D \geq 3$ and $m \geq 1$, let $G$ be a graph, and let $S$ be a subgraph of $G$. Furthermore, suppose there exist vertices $s \in V(S)$ and $y \in N_G(S)$ so that the graph $S + ys$ is $(m, D)$-extendable. Then $S$ is $(m, D)$-extendable. \hfill $\square$
\end{lemma}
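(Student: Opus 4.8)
The plan is to read off the $(m,D)$-extendability of $S$ directly from that of $S' := S + ys$, tracking the effect of the single extra edge. The maximum-degree condition is immediate: since $S$ is a subgraph of $S'$ we have $\Delta(S) \le \Delta(S') \le D$. So the only work is to verify inequality~\eqref{eq:extendable} for $S$ — that $|\Gamma_G(U)\setminus V(S)| \ge (D-1)|U| - \sum_{x\in U\cap V(S)}(d_S(x)-1)$ for every $U\subset V(G)$ with $|U|\le 2m$ — starting from the corresponding inequality for $S'$, which holds by hypothesis over exactly the same range of $U$.

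First I would fix such a $U$ and record how the two sides of~\eqref{eq:extendable} change when one passes from $S'$ back to $S$. Note that $S+ys$ being a subgraph of $G$ forces $ys\in E(G)$, while $y\in N_G(S)$ gives $y\notin V(S)$; hence $V(S')=V(S)\cup\{y\}$ is a disjoint union, $d_{S'}(y)=1$, and $d_{S'}(x)=d_S(x)$ for every $x\in V(S)$ except that $d_{S'}(s)=d_S(s)+1$. On the left-hand side this gives
$$|\Gamma_G(U)\setminus V(S)| = |\Gamma_G(U)\setminus V(S')| + \IND[y\in\Gamma_G(U)],$$
and on the right-hand side, splitting $U\cap V(S')=(U\cap V(S))\cup(U\cap\{y\})$ and using $d_{S'}(y)-1=0$,
$$\sum_{x\in U\cap V(S')}(d_{S'}(x)-1) = \sum_{x\in U\cap V(S)}(d_S(x)-1) + \IND[s\in U].$$

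Substituting these two identities into the extendability inequality for $S'$ yields
$$|\Gamma_G(U)\setminus V(S)| \ge (D-1)|U| - \sum_{x\in U\cap V(S)}(d_S(x)-1) + \big(\IND[y\in\Gamma_G(U)]-\IND[s\in U]\big),$$
so it remains only to observe that the bracketed correction term is nonnegative: if $s\in U$, then since $ys\in E(G)$ the vertex $y$ has a neighbour in $U$, so $y\in\Gamma_G(U)$ and the term is $0$; otherwise it is $\ge 0$ trivially. This establishes~\eqref{eq:extendable} for $S$ and completes the proof.

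There is no genuine obstacle here — the statement is a routine unwinding of \Cref{deF:goodness}, and the only thing to get right is the set arithmetic above. The one conceptual point worth flagging is that the argument works precisely because $y$ joins $S'$ as a \emph{leaf} (so its own degree contributes nothing to the degree sum) whose unique $S'$-neighbour is exactly the vertex $s$ at which the sum would otherwise lose a unit; this compensation is why it is leaves, rather than arbitrary edges, that one may safely delete while preserving extendability.
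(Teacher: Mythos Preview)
Your proof is correct. The paper does not supply its own argument for this lemma but simply cites it from \cite{montgomery2019spanning}; your unwinding of \Cref{deF:goodness}, with the key observation that $s\in U$ forces $y\in\Gamma_G(U)$ so the correction term $\IND[y\in\Gamma_G(U)]-\IND[s\in U]$ is nonnegative, is exactly the standard verification.
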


\begin{remark}
Let us remark here that \Cref{thm:FP}, \Cref{lemma:non-leaf edge} and \Cref{lemma:delete} are not algorithmic in the form they are stated. But, in \cite{draganic2022rolling} an algorithmic version is presented for $(n,d,\lambda)$-graphs. Hence, every step of the proof which uses these two results can also be made into a polynomial time algorithm. We will also use the Lovasz Local Lemma in a few instances to prove the existence of certain sets -- these too can be made into a deterministic polynomial time algorithm~\cite{moser2010constructive}.
\end{remark}

\section{Proof of~\Cref{thm:main}}

For technical reasons we assume that $\varepsilon<1/1000$, and we let $C=C(\varepsilon)$ be a large enough constant, such that all inequalities below go through. Note that since $d\geq \lambda C$, we also may assume that $d$ is large enough.
We aim to find $$d^*=\frac{(1-\varepsilon)d}{\ln d}$$ disjoint connected dominating sets.
We start with a lemma that provides us with $d^*$ disjoint dominating sets that are not necessarily connected, but do not have too many connected components, and such that every vertex in their union has large neighborhood in a disjoint set of vertices of linear size. Let us note here that we do not use the assumption on $\lambda$ in the lemma, so its conclusion also holds for arbitrary $d$-regular graphs $G$ on $n$ vertices.

\begin{lemma}\label{lem:small number of components}
There exist disjoint subsets $B, S_1,S_2,\ldots, S_{d^*}\subseteq V(G)$ such that:
\begin{itemize}
\item each $S_i$ is a dominating set in $G$;
\item $G[S_i]$ has at most $20n/(\varepsilon^2d)$ connected components;
\item every vertex in $V(G)$ has at least $\varepsilon^2 d/2$ neighbors in $B$.
\end{itemize}
\end{lemma}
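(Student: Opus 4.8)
I would build $B$ and the $S_i$'s from a short sequence of \emph{independent} random colourings, invoking the Lovász Local Lemma only for purely local degree conditions and using first–moment estimates for everything else; no spectral information about $G$ enters. Put $D:=(1+\varepsilon^2/2)d^*$ and $p:=(1-2\varepsilon^2)/D$; a direct computation with $\varepsilon<1/1000$ gives $(1-4\varepsilon^2)d\cdot p\ge(1+\varepsilon/2)\ln d$, which is the one numerical input.

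\textbf{Step 1: the set $B$ and a reservoir $R$.} First put each vertex into $B$ independently with probability $\varepsilon^2$. The event ``$|N(v)\cap B|\notin[\varepsilon^2 d/2,2\varepsilon^2 d]$'' has probability $\le 2e^{-\varepsilon^2 d/8}$ by Chernoff, is determined by $B$ restricted to $N(v)$, and hence is independent of all but at most $d^2$ of the analogous events; since $d\ge C(\varepsilon)$ is large the symmetric Local Lemma lets us fix a $B$ avoiding all of them, giving the third bullet. Next, inside $V\setminus B$ put each vertex into a reservoir $R$ independently with probability $\varepsilon^2$, and use the Local Lemma again to fix $R$ with $|N(v)\cap R|\in[\varepsilon^2 d/2,2\varepsilon^2 d]$ for all $v$. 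With $P:=V\setminus(B\cup R)$, every vertex now has at least $(1-4\varepsilon^2)d$ neighbours in $P$.

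\textbf{Step 2: a random partition of $P$.} Colour $P$ by a uniformly random $c\colon P\to[D]$ and set $C_i:=c^{-1}(i)$; since this colouring is drawn \emph{after} Step 1 it is unconditioned, so all first moments below are the obvious ones. For domination, let $\mathrm{Bad}_i:=\{v\in V:v\notin C_i,\ N(v)\cap C_i=\emptyset\}$. Then $\Pr[v\in\mathrm{Bad}_i]\le(1-1/D)^{|N(v)\cap P|}\le(1-1/D)^{(1-4\varepsilon^2)d}\le d^{-1-\varepsilon/2}$, so $\mathbb{E}|\mathrm{Bad}_i|\le nd^{-1-\varepsilon/2}\ll n/d$. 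For components, the key remark is that the component of $v$ in $G[C_i]$ has at least $1+d_{C_i}(v)$ vertices, whence
\[
\#\mathrm{comp}(G[C_i])=\sum_{v\in C_i}\frac{1}{|\mathrm{comp}_{C_i}(v)|}\le\sum_{v\in C_i}\frac{1}{1+d_{C_i}(v)}.
\]
Conditioned on $v\in C_i$ one has $d_{C_i}(v)\sim\mathrm{Bin}\bigl(|N(v)\cap P|,1/D\bigr)$, and the identity $\mathbb{E}\bigl[1/(1+\mathrm{Bin}(m,q))\bigr]=(1-(1-q)^{m+1})/((m+1)q)\le 1/((m+1)q)$ yields $\mathbb{E}\,\#\mathrm{comp}(G[C_i])\le |P|/\bigl((1-4\varepsilon^2)d+1\bigr)\le 2n/d$ --- for \emph{every} $i$, and with no use of expansion.

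\textbf{Step 3: discard a few colours, then patch.} By Markov, $\Pr[|\mathrm{Bad}_i|>n/d]\le d^{-\varepsilon/2}$ and $\Pr[\#\mathrm{comp}(G[C_i])>9n/(\varepsilon^2 d)]\le 2\varepsilon^2/9$, so the expected number of ``spoiled'' colours is below $D(d^{-\varepsilon/2}+2\varepsilon^2/9)<\varepsilon^2 d^*/4$; with probability at least $1/2$ fewer than $\varepsilon^2 d^*/2$ colours are spoiled, and we discard them, keeping at least $D-\varepsilon^2 d^*/2=d^*$ good colours with $|\mathrm{Bad}_i|\le n/d$ and $\#\mathrm{comp}(G[C_i])\le 9n/(\varepsilon^2 d)$ (discarded vertices simply become unused --- only $B$ and the surviving $S_i$ need be disjoint). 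It remains, for each surviving colour $i$, to enlarge $C_i$ to a dominating set by adjoining a set $R_i\subseteq R$ dominating $\mathrm{Bad}_i$, with the $R_i$ pairwise disjoint; then $S_i:=C_i\cup R_i$ is dominating, the $S_i$ are disjoint from each other (the $C_i$ are, the $R_i\subseteq R$ are disjoint and avoid every $C_j$) and from $B$, and since $|R_i|\le|\mathrm{Bad}_i|\le n/d$ we get $\#\mathrm{comp}(G[S_i])\le 9n/(\varepsilon^2 d)+n/d\le 20n/(\varepsilon^2 d)$.

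\textbf{Where the difficulty lies.} Steps 1--2 are routine; the heart of the argument is the final disjoint patching. Each $v\in\mathrm{Bad}_i$ has at least $\varepsilon^2 d/2$ neighbours in $R$, while $\sum_i|\mathrm{Bad}_i|\le d^*n/d\le n/\ln d$ is far smaller than $|R|\approx\varepsilon^2 n$, so processing the colours one at a time and giving each still–undominated $v\in\mathrm{Bad}_i$ an as–yet–unused reservoir neighbour uses up only $o(n)$ reservoir vertices in total; a crude edge count then shows at most $O\!\bigl(n/(\varepsilon^2\ln d)\bigr)$ vertices can ever be ``blocked'' (have all their reservoir neighbours used). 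The subtle point --- and the one real piece of technical work --- is to make these choices so that no later set $\mathrm{Bad}_i$ meets the current blocked set; this is arranged by keeping $\bigcup_j R_j$ suitably spread out, i.e.\ by bounding throughout the process how many of the already–used reservoir vertices lie in any single neighbourhood. Once this is done the lemma follows, and since nothing above used $\lambda$, it indeed holds for every $d$-regular graph.
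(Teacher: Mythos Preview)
Your Steps 1--2 are sound, and the component-counting trick via $\mathbb{E}[1/(1+\mathrm{Bin}(m,q))]\le 1/((m+1)q)$ is nice. But the proposal stops short of a proof precisely where you yourself flag the difficulty: the disjoint patching in Step~3 is not carried out, and I do not see how to complete it along the lines you sketch. The greedy assignment can genuinely fail --- a vertex $v\in\mathrm{Bad}_i$ may have all of its $\varepsilon^2 d/2$ reservoir neighbours already used by earlier colours --- and your suggested remedy, ``keep $\bigcup_j R_j$ spread out so that no neighbourhood absorbs too many used reservoir vertices'', is not an argument but a restatement of what you need. Note that the crude edge count you quote only bounds the number of blocked vertices by $O(n/(\varepsilon^2\ln d))$, which vastly exceeds the total size $\sum_i|\mathrm{Bad}_i|\le n/\ln d$ of all the sets you must still dominate; so there is no slack to simply absorb blocked vertices. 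Trying to randomise the reservoir allocation does not obviously help either: randomly partitioning $R$ into $d^*$ parts gives each $v$ a neighbour in part $i$ only with probability $1-d^{-\Theta(\varepsilon^2)}$, which is far too weak for either a union bound or the Local Lemma given the $\Theta(d^2)$ dependencies. The missing control is essentially a per-vertex bound on how many colours $i$ can have $v\in\mathrm{Bad}_i$, together with a compatible allocation scheme --- and proving such a bound simultaneously with your Markov-type global estimates is itself delicate, since the Local Lemma and positive-probability intersections do not combine for free.

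For comparison, the paper sidesteps patching altogether. It colours in two rounds: first into $d^*/\ln^5 d$ coarse classes (so that LLL easily forces every vertex to have $(1\pm\varepsilon/2)(1+\varepsilon)\ln^6 d$ neighbours in each class), then refines each class into $\ln^5 d$ subclasses. The point is that in the second round the relevant dependency degree drops to $O(d\ln^6 d)$, because a bad event for $(v,c,c')$ only interacts with events sharing the first coordinate $c$; this is just small enough for LLL to force every vertex to have between $\varepsilon^2\ln d$ and $20\ln d$ neighbours in \emph{every} final colour class. Domination and the component bound then follow immediately from minimum degree, with no reservoir and no patching. Your first-moment approach is attractive and would give an elegant alternative if Step~3 can be made to work, but as written the key step is missing.
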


\begin{proof}
If instead of $d^*$ one wanted to find $d^*/1000$ sets with the properties as above, then a simple random partition of the vertices with a standard application of the Local Lemma would be sufficient to prove the statement. Unfortunately, this does not work in our setting, as the number of dependencies outweighs the probability of a random set $S_i$ to be dominating. 

To remedy the situation, we will perform a random coloring of the vertices in two stages, with consecutive applications of the Local Lemma. To this end, denote 

\begin{alignat*}{2}
r_1&=\frac{d^*}{\ln ^5 d}=\frac{(1-\varepsilon)d}{\ln^6d};\quad &&r_2=\ln^5 d; \\
p_1&=\frac{1-\varepsilon^2}{r_1}=\frac{(1+\varepsilon)\ln^6d}{d}; \qquad &&p_2=\frac{1}{r_2}=\frac{1}{\ln^5d}.
\end{alignat*}
Noting that $r_1r_2=d^*$, we consider each set $S_i$ as corresponding to precisely one of the colors in $[r_1]\times[r_2]$. 

As a first step, we color each vertex $v\in V(G)$ either into one of the colors $[r_1]$ with probability 
$p_1$, or place it in the set $B$ with probability $\varepsilon^2$, independently. 
Now we can apply the Local Lemma (see, e.g., Chapter~5 of~\cite{alon2016probabilistic}) --- define the event $A_{v,c}$, for every $v\in V(G)$ and $c\in [r_1]\cup \{B\}$, to be the outcome where the number $k_c(v)$ of $c$-colored vertices in $N(v)$ satisfies
$$k_c(v)\notin\big[(1-\varepsilon/2)\mathbb E[k_c(v)], (1+\varepsilon/2)\mathbb E[k_c(v)]\big].$$
For $c\in [r_1]$ we have that $\mathbb E[k_c(v)]=(1+\varepsilon)\ln^6d$, and for $c=B$ the expectation is $\mathbb E[k_c(v)]=\varepsilon^2d\geq \ln^6d$. Hence,
we have by Chernoff bounds that $P(A_{v,c})\leq e^{-\ln^5d}=o(d^4)$.

The event $A_{v,c}$ possibly depends only on the events $A_{w,\hat{c}}$ for which either $v=w$, or for which $v$ and $w$ have a common neighbor, so
each event depends on at most $(d^2+1)(r_1+1)\leq d^3$ other events. Thus, by the Local Lemma, there is a choice of coloring so that none of $A_{v,c}$ holds --- in particular, every vertex has at least $(1-\varepsilon/2)(1+\varepsilon)\ln^6d\geq (1+\varepsilon/3)\ln^6d$ vertices in color $c\in[r_1]$ in its neighborhood, as well as at least $\varepsilon^2d/2$ vertices that got assigned to $B$.

In the second round, we assign another color to each vertex that got a color $c\in[r_1]$. More precisely, assign to each such vertex a color $c'\in[r_2]$ with probability $p_2=1/r_2$ independently; let $(c,c')$ be its final color. For each $v\in V(G)$, and $(c,c')\in[r_1]\times[r_2]$, denote by $A_{v,c,c'}$ the event that $N(v)$ contains at most $\varepsilon^2\ln d$, or at least $20\ln d$ vertices that got color $(c,c')$. Denote by $N_c(v)$ the neighborhood of $v$ that had color $c$ in the first coloring.

Since the event $A_{v,c,c'}$ possibly depends only on events $A_{w,c_0,c_0'}$ for which we have $c=c_0$ and $w\in \Gamma_G(N_c(v))$, it is independent of all but at most $k_c(v)\cdot d=O(d\ln^6d)$ other events. The probability that $N(v)$ contains at least $20\ln d$ vertices of color $(c,c')$ is by Chernoff bounds at most $o(d^{-2})$.
Furthermore, the probability that it contains at most $s=\varepsilon^2\ln d$ such vertices is at most
\begin{align*}
&\sum_{i=0}^{s} \binom{k_c(v)}{i} p_2^i \left(1 - p_2\right)^{k_c(v) - i} \\
&\leq (s+1) \cdot \binom{k_c(v)}{s} p_2^{s} \left(1 - p_2\right)^{k_c(v) - s}\\
&\leq (s+1)  \left(\frac{k_c(v)\cdot e}{s}\right)^s \left(\frac{1}{\ln^5d}\right)^{s} \left(1 - \frac{1}{\ln^5d}\right)^{k_c(v) - s}\\ 
&\leq (s+1)  \left(\frac{2e\ln^6d}{\varepsilon^2\ln d}\right)^s \left(\frac{1}{\ln^5d}\right)^{s} \left(1 - \frac{1}{\ln^5d}\right)^{(1+\varepsilon/4)\ln^6d}\\ 
&\leq (\varepsilon^2\ln d+1) \left(\frac{2e}{\varepsilon^2}\right)^{\varepsilon^2\ln d}  e^{-(1+\varepsilon/4)\ln d}
\leq e^{20\varepsilon^2\ln(\frac{1}{\varepsilon})\ln d}  e^{-(1+\varepsilon/4)\ln d}=o(d^{-1-\varepsilon/8}).
\end{align*}
where for the first inequality we used that in the binomial distribution the probability of observing a specific number of successes increases as the number of successes gets closer to the expected value, and also that for integers $a\geq b>0$ we have $\binom{a}{b}\leq (ae/b)^b$, as well as $1-x\leq e^{-x}$ for all $x\in\mathbb R$.
To summarize, $P(A_{v,c,c'})=o(d^{-1-\varepsilon/8})$ and each event $A_{v,c,c'}$ depends on at most $O(d\ln^6d)$ other events. 
Thus we can apply the Local Lemma again, to get that there is a choice of coloring such that 
for every vertex $v\in V(G)$ and each color $(c,c')\in[r_1]\times[r_2]$, the event  
$A_{v,c,c'}$ does not hold.
This gives the required partition. Indeed, for a given $i\in [d^*]$, let $(c,c')$ be the color corresponding to $S_i$; then we have

\begin{itemize}
\item $S_i$ is dominating as every vertex has a vertex of color $(c,c')$ in its neighborhood. 
\item Denote by $d_{c,c'}(v)$ the number of $(c,c')$-colored vertices in $N(v)$. Then the number of vertices in $S_i$ can be estimated as $\frac{\sum_{v_\in V(G)} d_{(c,c')}(v)}{d}\leq \frac{20n\ln d}{d}$, as every vertex of color $(c,c')$ in the sum is counted $d$ times, exactly once for each of its $d$ neighbors. Since
$G[S_i]$ has minimum degree $\varepsilon^2\ln d$, it has
at most $|S_i|/\delta(G[S_i])\leq 20n/(\varepsilon^2d)$ connected components.
\item By construction, every vertex has at least $\varepsilon^2 d/2$ neighbors in $B$.
\end{itemize}
\end{proof}

Now, we need to connect the components inside each $S_i$, and we will do so by using the vertices in $B$. The main challenge here is to construct short internally disjoint paths connecting the mentioned components, and whose interior is in $B$.

To this end, pick a representative vertex $x$ in each component in each $G[S_i]$. Denote by $X_i$ the set of representative vertices in $S_i$. 
Our goal is, for each $i\leq d^*$, to construct a path which contains all $x\in X_i$, with the remaining vertices being in $B$. Denote $X=\cup_{i\leq d^*} X_i$ and recall that by~\Cref{lem:small number of components} we have that $|X_i|\leq 20n/(\varepsilon^2d)$. For technical reasons, we may also assume that $|X_i|\geq n/(2d)$, by possibly adding more vertices from $S_i$ to $X_i$; since $S_i$ is dominating and $G$ is $d$-regular, this is indeed possible as then $|S_i|\geq n/(d+1)\geq  n/(2d)$.

We consider the graph $G'=G[X\cup B]$. Denote
\[
m=\frac{\lambda n}{d}+1 \quad \text{and} \qquad D=\frac{\varepsilon^4d}{36\lambda}.
\]
By \Cref{lem:expansion}\ref{p:joint}, $G'$ is $m$-joined.
Furthermore, the subgraph $I[X]$ (the independent set on $X$) is an $(m,D)$-extendable subgraph of $G'$. Indeed, by~\Cref{lem:expansion}\ref{p:expansion} for $k=d/(36\lambda)$ we see that for every set $U$ of size at most $n/12k>2m$ we have $|\Gamma_{G'}(U)\setminus X|\geq |\Gamma_B(U)|\geq \varepsilon^4\frac{d}{36\lambda}|U|= D|U|$, so inequality (\ref{eq:extendable}) is trivially satisfied for all $U$ with $|U|\le 2m$.

We will now show how to construct the required paths for $S_1$, and later we will argue how to do the same for the remaining $S_i$'s analogously. Denote by $$s=n-2Dm-3m\geq n-\varepsilon^4n-3\lambda n/d-3\geq n/2$$ the lower bound on the maximal number of vertices in an $(m,D)$-extendable subgraph of $G'$ which we are allowed to construct using \Cref{thm:FP}.

 Now, we can extend the $(m,D)$-extendable subgraph $I[X]$ by attaching $(D/2{-}1)$-ary trees of size $\frac{s}{3|X_1|}\leq d$ (and of depth at most $\log_{D/2{-}1 } d$) to each vertex in $X_1$. Note that this can be done using~\Cref{thm:FP}, since the total size of the constructed forest (including all vertices in $X$) is at most $s/3+|X|\leq s/2$. Now, consider the set $L_1$ of the vertices of an arbitrary choice of $\lfloor|X_1|/2\rfloor$ of the trees, and $L_2$ the vertices of the remaining trees. Since both sets contain at least $s/9> \lambda n/d$ vertices each, there is an edge between the two sets by~\Cref{lem:expansion}\ref{p:joint}. Adding this edge to the current $(m,D)$-extendable graph creates a path $P$ of length at most $2\log_{D/2{-}1 } d+1$ between two vertices in $X_1$, while the constructed graph stays $(m,d)$-extendable by \Cref{lemma:non-leaf edge}. Now we can use Lemma~\ref{lemma:delete} to remove all vertices (leaf-by-leaf) of the attached trees, except those in the path $P$, so that we are again left with an $(m,D)$-extendable subgraph.

We now repeat this process, until we have connected all vertices in $X_1$. More precisely, we perform the following inductively described procedure. Suppose that at the $i$-th step, for $i<|X_1|-1$, we have constructed internally vertex-disjoint paths $P_1,P_2,\ldots, P_i$ with endpoints in $X_1$, that satisfy the following invariants:
\begin{itemize}
\item the internal vertices are in $B$; 
\item the length of $P_j$ for all $j\leq i$ is at most $2\log_{D/2{-}1 } \left(\frac{n}{|X_1|-j+1}\right)+1$;
\item The graph $I[X]\cup P_1\cup\ldots \cup P_i$ is a linear forest and an $(m,D)$-extendable subgraph of $G'$;
\item the number of components (i.e., paths) in $I[X_1]\cup P_1\cup\ldots \cup P_i$ is $|X_1|-i$.
\end{itemize}
In the $(i+1)$-th step we want to construct a path $P_{i+1}$ that maintains the invariants. 
In the end, after the $(|X_1|-1)$-th step, we will then have the required path which contains all vertices in $X_1$.

For each path in the linear forest $I[X_1]\cup P_1\cup\ldots \cup P_i$, choose one endpoint, and let $X_1'$ denote the set of these endpoints. As before, extend the $(m,D)$-extendable subgraph $I[X]\cup P_1\cup\ldots \cup P_i$ by attaching a $(D/2{-}1) $-ary tree to each vertex in $X_1'$, each one of size $\frac{s}{3|X_1'|}$, and hence of depth at most $\log_{D/2{-}1 }\left(\frac{s}{|X_1'|}\right)\leq\log_{D/2{-}1 }  \left(\frac{n}{|X_1|-i}\right)$. We can indeed use~\Cref{thm:FP}, as the current forest consists of $X$ and the now constructed trees (which together contain at most $s/2$ vertices), and the previously found paths $P_1,\ldots, P_i$, whose total number of vertices is at most: 

\begin{align}
\sum_{1\leq j\leq i}\left(2\log_{D/2{-}1 } \left(\frac{n}{|X_1|-j+1}\right)+1\right) 
&\leq |X_1|+2\sum_{j=1}^{|X_1|-2}\log_{D/2{-}1 } \left(\frac{n}{|X_1|-j+1}\right)\notag \\
&= |X_1|+2\sum_{j=3}^{|X_1|}\log_{D/2{-}1 } \left(\frac{n}{j}\right)\notag \\
&=|X_1|+2|X_1|\log_{D/2{-}1 } n-2\sum_{j=3}^{|X_1|}\log_{D/2{-}1 } (j)\notag \\
&=|X_1|+2|X_1|\log_{D/2{-}1 } n-2\log_{D/2{-}1 } \left((|X_1|)!/2\right)\notag \\
&\leq |X_1|+2|X_1|\log_{D/2{-}1 } n-2|X_1|\log_{D/2{-}1 } \left(\frac{|X_1|}{e}\right)\notag \\
&\leq |X_1|+2|X_1|\left(\log_{D/2{-}1 } n-\log_{D/2{-}1 } \left(\frac{n}{2de}\right)\right)\notag\\
&=|X_1|+2|X_1|\log_{D/2{-}1 } \left(2de\right)\leq \frac{100n\ln d} {\varepsilon^2d\ln C}\leq \frac{s}{2},\label{total number in paths}
\end{align}
where we used that $n/(2d)\leq |X_1|\leq 20n/(\varepsilon^2d)$, $D/2{-}1\geq \sqrt{C}$ and $s\geq n/2.$
Hence, in total, the forest at any point contains less than $s$ vertices, so we can successfully apply~\Cref{thm:FP}.

As $i<|X_1|-1$, by the induction hypothesis we have at least $|X_1|-i\geq 2$ trees, so we can group them into two collections each containing at least a third of all vertices. As there are $s/3$ vertices in total in all trees, there is an edge between the vertices in these two collections, as the number of them is at least $\frac{s/3}{3}\geq \frac{\lambda n}{d}$ each. This edge closes a path $P_{i+1}$ connecting two endpoints in the linear forest $I[X_1]\cup P_1\cup\ldots \cup P_i$. Adding this edge via \Cref{lemma:non-leaf edge} and removing the unused edges (leaf-by-leaf) using~\Cref{lemma:delete}, we are left with an $(m,D)$-extendable subgraph 
$I[X_1]\cup P_1\cup\ldots \cup P_{i+1}$ of $G'$.
Let us verify that all invariants are still satisfied. 

By construction, all internal vertices of the new path are in $B$.
The length of the new path is at most twice the depth of the trees plus one, i.e. it is of length at most $2\log_{D/2{-}1}  \left(\frac{n}{|X_1|-i}\right)+1$.
By construction, the subgraph $I[X_1]\cup P_1\cup\ldots \cup P_i\cup P_{i+1}$ is still $(m,D)$-extendable. Furthermore, as $P_{i+1}$ connects two endpoints of a linear forest through a path which is internally vertex-disjoint from $I[X_1]\cup P_1\cup\ldots \cup P_i$, we obtain a new linear forest with one less component than before, resulting in $|X_1|-i-1$ components/paths, as required. 

Continuing from the final $(m,D)$-extendable embedding (which now connects all vertices in $X_1)
$, we can now follow the same procedure for the sets $X_2,\ldots, X_{d^*}$, one by one, always extending the current $(m,D)$-extendable subgraph. Here we stress that when we are about to do the procedure for the set $X_i$, we continue extending the $(m,D)$-extendable subgraph used for $X_{i-1}$, repeating the exact same argument as before, only bearing in mind that we have already used up some vertices in $B$ for the previous steps of the embedding.

The only thing we need to verify is that the total number of vertices in the forest which we construct is always at most $s$. But this is straightforward to see, as the total number of vertices in the current $(D/2{-}1)$-ary trees which we use in every step is at most $s/2$ by construction. On the other hand, the total number of vertices in paths used for all $X_i$, by (\ref{total number in paths}), is at most $${d^*}\frac{100n\ln d} {\varepsilon^2d\ln C}\leq  \frac{100 n}{\varepsilon^2\ln C}< \frac{n}{4}\leq \frac{s}{2},$$ where we use that $C\geq e^{400/\varepsilon^2}$.

To summarize, we managed to connect all components in each dominating set $S_i$, $i\in [d^*]$, using internally vertex-disjoint paths whose internal vertices are in $B$. Thus we have constructed $d^*$ disjoint connected dominating sets in $G$, as required.

\section{Concluding remarks}

We have shown that $(n, d, \lambda)$-graphs, under modest assumptions on the spectral ratio $d/\lambda$, contain at least $(1 - o_d(1))d / \log d$ disjoint connected dominating sets, which is asymptotically optimal. This result demonstrates that robustly connected graphs can support a large number of disjoint connected dominating sets (CDSs). Moreover, our results imply that random $d$-regular graphs, and the binomial random graph $G(n, \frac{d}{n})$, for $d \gg \log n$, also typically contain $(1 - o_d(1))d / \log d$ disjoint CDSs, which matches the asymptotic lower bound.

As discussed in the introduction, \cite{censor2017tight} established that $k$-connected graphs contain $\Omega(k / \log^2 n)$ disjoint CDSs, while \cite{censor2014new} showed the existence of $k$-connected graphs with at most $O(k / \log n)$ disjoint CDSs. It would be interesting to determine whether a matching lower bound can be achieved. As a potentially more accessible problem, one could try to prove such a bound for larger values of $k = k(n)$. It is instructive to recall that that the so-called algebraic connectivity of a graph $G$, equal to $d-\lambda_2(G)$ in the case of $d$-regular graphs, is well-known to lower bound the standard vertex connectivity when $d<n-1$ (see, e.g., \cite{brouwer2011spectra}). Hence, in our context for $(n,d,\lambda)$-graphs $G$ with $\lambda\ll d$, the connectivity of $G$ is close to its degree $d$.

Recall the Independent Spanning Trees conjecture by Itai and Zehavi~\cite{zehavi1989three}, which asserts that every $k$-connected graph $G$ contains $k$ vertex-independent spanning trees. These are defined as $k$ spanning trees rooted at a node $r \in V(G)$, where the paths from $r$ to each vertex $v \in V(G)$ in different trees are internally vertex-disjoint. Our theorem implies that $(n, d, \lambda)$-graphs with mild conditions on the spectral ratio contain $(1 + o_d(1)) \frac{d}{\ln d}$ vertex-independent spanning trees, which is tight up to the $\ln d$ factor. This result extends to random $d$-regular graphs $G\sim G(n,d)$, as well as the binomial random graph $G(n, \frac{d}{n})$ for $d \gg \log n$. It would be interesting to prove the Itai-Zehavi conjecture for those random graphs, with the next step being to find $\Theta(d)$ vertex-independent spanning trees.




\vspace{0.66cm}

\noindent \textbf{Acknowledgement.} Part of this work was performed when the second author was visiting the Mathematical Institute and Merton College of the University of Oxford. He wishes to express his gratitude for the hospitality and great scientific atmosphere of these institutions.

{\small 

 }
\end{document}